\newcommand{%
    
    \import{./VectorDiagrams/}{.pdf_tex}
}[1]{%
    
    \import{./VectorDiagrams/}{#1.pdf_tex}
}
\theoremstyle{plain}
\newtheorem{theorem}{Theorem}
\newtheorem{corollary}{Corollary}
\newtheorem{lemma}{Lemma}
\newtheorem{remark}{Remark}
\newtheorem{definition}{Definition}
\newtheorem{example}{Example}
\newtheorem{conjecture}{Conjecture}
\def\o{\omega}
\def\S{\Sigma}
\def\s{\sigma}
\def\CE{\mathcal{CE}}
\def\CH{\mathcal{CH}}
\def\QS{\mathcal{QS}}
\def\S{\mathcal{S}}
\def\UQCE{\mathcal{UQCE}}
\def\USCE{\mathcal{USCE}}
\def\BGCE{\mathcal{BGCE}}
\def\CED{\mathcal{CED}}
\begin{document}
\title[Symmetric Rigidity]{Symmetric Rigidity for Circle Endomorphisms with Bounded Geometry}
\author[Adamski, Hu, Jiang, Wang]{John Adamski, Yunchun Hu, Yunping Jiang, and Zhe Wang}

\address{John Adamski: Department of Mathematics, Fordham University} 
\email{jadamski1@fordham.edu}


\address{Yunchun Hu: Department of Mathematics and Computer Science, CUNY Bronx Community College} 
\email{yunchun.hu@bcc.cuny.edu}

\address{Yunping Jiang: Department of Mathematics, CUNY Queens College and Graduate Center}
\email{yunping.jiang@qc.cuny.edu}

\address{Zhe Wang: Department of Mathematics and Computer Science, CUNY Bronx Community College}
\email{zhe.wang@bcc.cuny.edu}

\thanks{This material is based upon work supported by the National Science Foundation. It is also partially supported by a
collaboration grant from the Simons Foundation (grant number 523341) and PSC-CUNY awards.} 

\keywords{quasisymmetric circle homeomorphism, symmetric circle homeomorphism, circle endomorphism with bounded geometry preserving the Lebesgue measure, uniformly quasisymmetric circle endomorphism preserving the Lebesgue measure, martingales}

\subjclass[2010]{Primary: 37E10, 37A05;  Secondary: 30C62, 60G42}

\begin{abstract}
Let $f$ and $g$ be two circle endomorphisms of degree $d\geq 2$ such that each has bounded geometry, preserves the Lebesgue measure, and fixes $1$. 
Let $h$ fixing $1$ be the topological conjugacy from $f$ to $g$. That is, $h\circ f=g\circ h$. 
We prove that $h$ is a symmetric circle homeomorphism if and only if $h=Id$.
Many other rigidity results in circle dynamics follow from this very general symmetric rigidity result.  
\end{abstract}

\maketitle

\section{Introduction}

A remarkable result in geometry is the so-called Mostow rigidity theorem. This result assures that two closed hyperbolic $3$-manifolds are isometrically equivalent if they are homeomorphically equivalent~\cite{Mo}.
A closed hyperbolic $3$-manifold can be viewed as the quotient space of a Kleinian group acting on the open unit ball in the $3$-Euclidean space. So a homeomorphic equivalence between two closed hyperbolic $3$-manifolds
can be lifted to a homeomorphism of the open unit ball preserving group actions. The homeomorphism can be extended to the boundary of the open unit ball as a boundary map. The boundary is the Riemann sphere and the boundary map is a quasi-conformal homeomorphism. A quasi-conformal homeomorphism of
the Riemann sphere is absolutely continuous. It follows that the boundary map has no invariant line field, and thus it is a M\"obius transformation. 

For closed hyperbolic Riemann surfaces, the situation is quite complicated.
A closed hyperbolic Riemann surface can be viewed as a Fuchsian group acting
on the open unit disk in the complex plane. A homeomorphic equivalence between two closed
hyperbolic Riemann surfaces can be lifted to a homeomorphism of the open unit disk preserving group actions. This homeomorphism can be extended to the
boundary of the open unit disk as a boundary map. In this case, the boundary
is the unit circle and the boundary map is a quasisymmetric homeomorphism. The complication is due to the fact that a quasisymmetric homeomorphism may not be absolutely continuous. Complicated maps like this are a rich source for Teichm\"uller theory. However, if we assume that the boundary map is absolutely continuous, then by following the main idea in the proof of Mostow’s rigidity theorem, it is a M\"obius transformation.

Shub and Sullivan further developed the study of conjugacies between smooth expanding circle endomorphisms in~\cite{SS}. The conjugacy in this case is always quasisymmetric (refer to~\cite[Chapter 3]{JiangBook}, and see also~\cite{JiangGeo,JiangGibbs}). They proved that if the conjugacy is absolutely continuous then it is smooth. One of the authors (Jiang) studied the smoothness of conjugacies between one-dimensional maps with singularities. In~\cite{JiangUlam}, he first proved that the conjugacy between two generalized Ulam–von Neumann transformations is smooth if their power-law singularities have the same exponents, their asymmetries are the same, and their eigenvalues at all corresponding periodic points are the same. Later, the hypothesis that their asymmetries are the same was removed  in~\cite{JiangSm, JiangRig}. Moreover, in~\cite{JiangGeo, JiangSm, JiangRig}, he studied the smoothness of the conjugacy between two geometrically finite one-dimensional maps and proved that the conjugacy between two geometrically finite one-dimensional maps is always quasisymmetric. He also defined a smooth invariant called the scaling function for a geometrically finite one-dimensional map and proved that the scaling function and the exponents of power-law singularities are complete smooth invariants. That is, the conjugacy between two geometrically finite one-dimensional maps is smooth if and only if the maps have the same scaling function and the exponents of the corresponding power-law singularities are the same. More results on differential properties and the symmetric properties of a conjugacy between two one-dimensional maps are given in~\cite{JiangDiff,HuProc}.  Finally, the symmetric regularity of the conjugacy between two one-dimensional maps (with possible singularities) becomes an important issue in the study of one-dimensional dynamical systems, in particular in the study of geometric Gibbs theory (see~\cite{JiangSymm,JiangIntro,JiangGibbs}). We would like to note that a symmetric homeomorphism is, in general, not absolutely continuous.  The following conjecture is stated in~\cite[Conjecture 2.4]{JiangSymm} and~\cite[Conjecture10.12]{JiangGibbs}. 

\medskip
\begin{conjecture}~\label{conj1}
Suppose $f$ and $g$ are two uniformly symmetric circle endomorphisms of the same degree $d\geq 2$ such that $f(1)=g(1)=1$.
Suppose both $f$ and $g$ preserve the Lebesgue measure on the circle. 
Suppose $h$ is the conjugacy from $f$ to $g$ with $h(1)=1$. That is, $h\circ f=g\circ h$. 
Then $h$ is a symmetric circle homeomorphism if and only if $h$ is the identity.
\end{conjecture}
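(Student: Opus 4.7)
The forward implication is immediate. For the converse, my plan is to use martingale theory on the natural refining partitions of the circle induced by the dynamics of $f$: I will show that $h$ must preserve Lebesgue measure, from which $h = \mathrm{Id}$ follows by a standard argument.

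Let $\xi_0$ be the partition of the circle into the $d$ arcs bounded by the points of $f^{-1}(1)$, and set $\xi_n = f^{-n}(\xi_0)$, a refining sequence of finite partitions whose atoms shrink to points. Because $h \circ f = g \circ h$ and $h(1) = 1$, the homeomorphism $h$ carries each atom of $\xi_n$ bijectively onto the corresponding atom of the analogous partition $\eta_n$ for $g$. Write $m$ for Lebesgue measure on the circle and $\mu = h^{-1}_{*} m$ for its pullback under $h$; then $\mu$ is an $f$-invariant Borel probability measure, since $f = h^{-1}\circ g\circ h$ and $m$ is $g$-invariant. Define
\[
Y_n(x) \;=\; \frac{\mu(I_n(x))}{m(I_n(x))} \;=\; \frac{|h(I_n(x))|}{|I_n(x)|},
\]
where $I_n(x)$ is the atom of $\xi_n$ containing $x$. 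Using the additivity of $m$ and $\mu$ over the refinement of each atom, one checks that $(Y_n)$ is a nonnegative martingale with respect to the filtration generated by $(\xi_n)$, with mean $E_m[Y_n] = 1$. By the martingale convergence theorem, $Y_n\to Y_\infty$ $m$-a.e., and $Y_\infty$ coincides $m$-a.e. with the Radon--Nikodym derivative of the absolutely continuous part of $\mu$ with respect to $m$.

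The heart of the argument is to show $Y_\infty\equiv 1$ $m$-a.e. The uniformly symmetric hypothesis on $f$ and $g$ yields bounded geometry of $\xi_n$ and $\eta_n$, so adjacent atoms at each level have uniformly comparable lengths. The symmetric hypothesis on $h$ then implies that for any adjacent pair $I,J\in\xi_n$ the ratio $(|h(I)|/|h(J)|)\big/(|I|/|J|)$ tends to $1$ uniformly as $n\to\infty$, so the values of $Y_n$ on adjacent atoms become asymptotically equal. I plan to use this in two ways: first, to establish uniform integrability of $(Y_n)$, so that $E_m[Y_\infty]=1$ and hence $\mu\ll m$; second, combined with the ergodicity of $f$ with respect to $m$ (standard for uniformly symmetric circle endomorphisms preserving Lebesgue), to conclude that $Y_\infty = d\mu/dm$ is $f$-invariant, therefore constant, and hence identically $1$.

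Once $Y_\infty\equiv 1$ is established, $\mu=m$, i.e.\ $h$ preserves Lebesgue. An orientation-preserving circle homeomorphism that preserves Lebesgue must be a rotation, and fixing $1$ forces it to be the identity. The principal obstacle will be establishing the uniform integrability of $(Y_n)$: the symmetric condition supplies only qualitative decay of distortion with no explicit modulus, so a naive pointwise bound on martingale increments does not close the argument. I expect that overcoming this will require exploiting simultaneously the symmetric structure of $h$ and the dynamical $f$-invariance of $\mu$, likely through a finer entropy-type control such as a uniform bound on $E_m[Y_n\log Y_n]$, or through a comparison with the reciprocal martingale associated to $h^{-1}$ on the $g$-partitions $\eta_n$.
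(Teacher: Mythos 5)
Your martingale framework is sound as far as it goes: $Y_n(x)=|h(I_n(x))|/|I_n(x)|$ is indeed a nonnegative mean-one martingale with respect to the filtration generated by the refining partitions, and the final reduction (``$\mu=m$ forces $h$ to be a rotation fixing $1$, hence the identity'') is correct. The approach is, however, genuinely different from the paper's, and the place where you say the obstacle lies is exactly where the argument is not yet a proof.

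The paper does not use martingales. It works with the extremal ratio $\Phi=\sup_{I}|h(I)|/|I|$ (allowing $\Phi=\infty$), shows that the set $X$ of points near which this supremum is asymptotically attained is nonempty (a compactness argument) and then \emph{dense} (this is where Lebesgue-invariance of both $f$ and $g$ together with bounded geometry is used, via a tail-sum estimate on nested preimages of a fixed partition interval), and finally uses symmetry of $h$ only in the following soft way: once an interval $I$ with $|h(I)|/|I|>D_2$ is found, symmetry lets you push a slightly weaker bound $>D_1$ to the two adjacent intervals of the same length, and density of $X$ lets you chain such intervals all the way to $0$ and to $1$, yielding $|h([0,1])|/|[0,1]|>D_1>1$, a contradiction. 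This sidesteps entirely the question of whether $(Y_n)$ is uniformly integrable or whether $\mu\ll m$; it also works at the greater generality of bounded geometry, where uniform integrability would be even harder to obtain.

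The concrete gap in your proposal is the step ``show $Y_\infty\equiv1$.'' The martingale convergence theorem only gives $Y_n\to Y_\infty$ a.e., and without uniform integrability $Y_\infty$ identifies the density of the \emph{absolutely continuous part} of $\mu$; it is entirely consistent with everything established so far that $\mu\perp m$ and $Y_\infty=0$ a.e. (this is exactly the situation for a totally singular symmetric $h$, which exists in abundance). You acknowledge this and propose to close it with an entropy-type bound $E_m[Y_n\log Y_n]\le C$ or by comparing with the reciprocal martingale for $h^{-1}$, but neither is carried out, and neither follows directly from the stated hypotheses: the symmetric modulus $\epsilon(t)$ controls ratios of $h$-images of \emph{equal-length} adjacent intervals, whereas adjacent atoms of $\xi_n$ have only comparable (not asymptotically equal) lengths under the uniformly symmetric hypothesis, so the claim that $(|h(I)|/|h(J)|)/(|I|/|J|)\to1$ uniformly over adjacent atoms needs its own argument and may be false as stated. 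The ergodicity of $m$ under $f$ is also asserted rather than proved, though that part is plausible from bounded geometry. In short, this is a reasonable plan along a route the authors themselves explored in earlier work, but the crucial uniform-integrability step is left open, and the paper's actual proof takes a different, more elementary path that never needs it.
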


The paper~\cite[Theorem 2.5]{JiangSymm} (see also~\cite[Corollary 10.9]{JiangGibbs}) 
gives a partial proof of Conjecture~\ref{conj1}, and many discussions about symmetric rigidity 
in the smooth case are also given. In our study of this conjecture, we extended our research into uniformly quasisymmetric circle endomorphisms in~\cite{JiangNote} and posed a more general conjecture 
(see~\cite{JiangNote} and~\cite[Conjecture 2]{HJW}) as follows.

\medskip
\begin{conjecture}~\label{conj2}
Suppose $f$ and $g$ are two uniformly quasisymmetric circle endomorphisms of the same degree $d\geq 2$ such that $f(1)=g(1)=1$. 
Suppose both $f$ and $g$ preserve the Lebesgue measure on the circle. 
Suppose $h$ is the conjugacy from $f$ to $g$ with $h(1)=1$. That is, $h\circ f=g\circ h$.  
Then $h$ is a symmetric circle homeomorphism if and only if $h$ is the identity.
\end{conjecture}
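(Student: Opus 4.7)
Since the main theorem of this paper (stated in the abstract) proves the identical conclusion under the hypothesis that $f$ and $g$ have bounded geometry, my plan is to deduce Conjecture~\ref{conj2} as a corollary by showing that every uniformly quasisymmetric degree-$d$ circle endomorphism fixing $1$ and preserving Lebesgue measure automatically has bounded geometry. Applying this to both $f$ and $g$ puts the pair in the scope of the main theorem and yields $h = \mathrm{Id}$ immediately. Thus the entire conjecture is reduced to one statement about the two regularity classes involved: uniform quasisymmetry implies bounded geometry.

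To prove the reduction, I would fix $n$ and pick two adjacent intervals $I, I' \in \eta_n^f := f^{-n}\{1\}$ sharing an endpoint $p$. Near $p$, the map $f^n$ admits a local inverse branch $\psi$ with $\psi(1) = p$, and this branch carries the two intervals $L, R \in \eta_1^f$ adjacent at $1$ (up to orientation) onto $I$ and $I'$ respectively. The definition of a \emph{uniformly quasisymmetric circle endomorphism} is precisely that the family of all iterated inverse branches $\{\psi\}$ is uniformly quasisymmetric, with a single constant $K$ independent of $n$ and of the branch chosen. Because $|L|/|R|$ is already bounded by the quasisymmetric constant of $f$ at level one, the $K$-quasisymmetry of $\psi$ propagates this into a uniform bound on $|I|/|I'|$ independent of $n$. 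Swapping $I$ and $I'$ gives the reverse inequality, and the same argument works for $g$. This is exactly bounded geometry for $\eta_n^f$ and $\eta_n^g$, so the hypotheses of the main theorem are satisfied and the conclusion $h = \mathrm{Id}$ follows.

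The main obstacle in this plan is verifying the key assertion that iterated inverse branches of a uniformly quasisymmetric circle endomorphism form a \emph{uniformly} quasisymmetric family. A composition of $n$ individually $K$-quasisymmetric maps is only $K^n$-quasisymmetric in general, and this blow-up would defeat the argument. The definition of uniform quasisymmetry for circle endomorphisms used in~\cite{JiangNote} is tailored to exclude precisely this phenomenon, so in practice the verification is a matter of unwinding that definition; nevertheless, since $f$ is a non-invertible endomorphism with $d$ branches, one must be careful in selecting the correct inverse branch at each level and in checking that the Lebesgue-preserving hypothesis is compatible with the uniform control all the way down the Markov cascade. Once this is in hand, Conjecture~\ref{conj2} is settled by direct appeal to the bounded-geometry main theorem of the paper, which itself presumably carries out the heavier martingale argument suggested by the keywords.
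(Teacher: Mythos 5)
Your proposal takes essentially the same route as the paper: Conjecture~\ref{conj2} is stated there as Corollary~\ref{cor2} and is deduced directly from the Main Theorem via the containment $\UQCE(d)\subset\BGCE(d)$, which the paper simply cites from~\cite{JiangNote,JiangGibbs} rather than re-deriving. Your sketch of that containment is in the right spirit (uniform quasisymmetry means the family $\{F^{-n}\}$ is $M$-quasisymmetric with a single constant, and a fixed-modulus quasisymmetric map sends a subinterval of fixed relative length to one of relative length bounded below, which is precisely Definition~\ref{bg}), so the reduction is sound, though the paper itself does not spell it out.
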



In this paper, we will prove both of these conjectures completely by proving the following more general theorem.

\medskip
\begin{theorem}[Main Theorem]~\label{main}
Suppose $f$ and $g$ are two circle endomorphisms having bounded geometry of the same degree $d\geq 2$ such that $f(1)=g(1)=1$. 
Suppose $f$ and $g$ preserve the Lebesgue measure on the unit circle. 
Let $h$ be the conjugacy from $f$ to $g$ with $h(1)=1$. That is, $h\circ f=g\circ h$. 
If $h$ is a symmetric homeomorphism, then $h$ is the identity.
\end{theorem}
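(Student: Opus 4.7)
The plan is to prove that $h$ preserves Lebesgue measure---from which $h=\mathrm{Id}$ follows, since any monotone circle homeomorphism fixing $1$ and preserving Lebesgue is the identity---via a martingale argument on the dynamical partitions generated by $f$ and $g$. For each $n\geq 0$, let $\eta_n$ be the partition of the circle whose endpoints are the points of $\bigcup_{k=0}^{n}f^{-k}(\{1\})$, and let $\xi_n$ be the analogous partition for $g$. Since $h\circ f=g\circ h$ and $h(1)=1$, the homeomorphism $h$ carries $\eta_n$ atom-by-atom onto $\xi_n$, and bounded geometry gives $|I|\asymp d^{-n}$ uniformly for $I\in\eta_n$ (and similarly for $\xi_n$). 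Setting
\[
\phi_n(x)\;=\;\frac{|h(I_n(x))|}{|I_n(x)|},
\]
with $I_n(x)\in\eta_n$ the atom containing $x$, the Lebesgue-invariance of $f$ and $g$ gives $\int\phi_n\,dm=1$, and since $\eta_{n+1}$ refines $\eta_n$ we have $\mathbb{E}[\phi_{n+1}\mid\eta_n]=\phi_n$. So $(\phi_n)$ is a nonnegative mean-one martingale, and Doob's theorem provides an almost-everywhere limit $\phi_\infty$.

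Next I would use the symmetric hypothesis to show $\phi_\infty\equiv 1$ almost everywhere. Symmetry says the quasisymmetric distortion of $h$ tends to $1$ at small scales, so---after subdividing each atom of $\eta_n$ into comparable-length pieces and invoking symmetry on each---one should obtain that for any adjacent $I,J\in\eta_n$ the ratio $(\phi_n|_I)/(\phi_n|_J)$ tends to $1$ uniformly as $n\to\infty$. The conjugacy equation yields the cocycle identity
\[
\frac{\phi_{n-1}(f(x))}{\phi_n(x)}\;=\;\frac{|I_n(x)|/|I_{n-1}(f(x))|}{|h(I_n(x))|/|h(I_{n-1}(f(x)))|},
\]
i.e., the ratio of the $f$- and $g$-scaling approximants along the $h$-orbit; symmetry makes this ratio tend to $1$, rendering $\phi_\infty$ invariant under $f$. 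Since $f$ is Lebesgue-ergodic (indeed mixing, by bounded geometry), $\phi_\infty$ is a.e.\ constant, and the uniform comparison of neighboring $\phi_n$ values upgrades the a.e.\ convergence to $L^1$-convergence, forcing the constant value to equal $1$. Hence $h_*m=m$, as desired.

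The main obstacle will be converting the pointwise symmetric hypothesis---which in its classical form controls only equal-length adjacent pairs---into a uniform quantitative statement about the dynamical ratios $\phi_n$, whose atoms have bounded-geometry but generically unequal sizes. The required comparisons must be made through subdivisions of atoms and then chained along the partition, with cumulative error controlled by the bounded-geometry constant. A more delicate point is that a symmetric homeomorphism need not be absolutely continuous, so an a.e.\ martingale limit is not by itself sufficient; one must use the Lebesgue-preserving and bounded-geometry hypotheses on both $f$ and $g$ to prevent mass from escaping to a singular limit, thereby securing $L^1$-convergence of $\phi_n$ to $1$.
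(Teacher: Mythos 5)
Your proposal takes a genuinely different route from the paper. The paper never introduces a martingale: it defines $\Phi=\sup_{I}|h(I)|/|I|$ and the set $X$ of points where $\Phi$ is realized along a shrinking sequence of intervals, proves $X$ is dense using Lebesgue invariance and the bounded-geometry tail estimate of Lemma~\ref{tail} (this is where invariance of $m$ enters), and then runs a covering argument: a good interval near $X$ passes its high ratio to its two equal-length neighbors via the symmetric condition, and one exhausts $[0,a]$ and $[a,1]$ by intervals of ratio $>D_1$, forcing the contradiction $1>D_1>1$. Symmetry is used only locally, always on \emph{equal-length adjacent} intervals, and no chaining over many atoms is ever attempted.

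Your martingale plan has genuine gaps that do not look repairable under the stated hypotheses.

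\emph{Adjacent-atom comparison fails.} You need $\phi_n|_I/\phi_n|_J\to 1$ uniformly for adjacent $I,J\in\eta_n$. The symmetric condition only controls adjacent \emph{equal-length} intervals; to transfer it to partition atoms you must subdivide, which requires $|I|\asymp|J|$ with a constant independent of $n$. That is exactly ``bounded nearby geometry,'' which the paper notes is equivalent to uniform quasisymmetry of $f$ and is \emph{strictly stronger} than bounded geometry. The paper's own example $f_\alpha$ is a bounded-geometry endomorphism for which adjacent level-$n$ atoms near the fixed point have length ratio $(\alpha/(1-\alpha))^n\to\infty$. For such $f$, symmetry of $h$ gives you no uniform control on $\phi_n|_I/\phi_n|_J$ for adjacent $I,J$, so the step at the heart of your argument breaks.

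\emph{Chaining blows up.} Even if the adjacent-atom comparison were available, turning it into ``$\phi_n$ is nearly constant on $[0,1]$'' requires chaining over $d^n$ atoms, producing an error of order $(1+\epsilon_n)^{d^n}$. The symmetric hypothesis gives only $\epsilon(t)\to 0$ with no rate, so this product need not stay bounded. Without near-constancy of $\phi_n$ on all of $[0,1]$, the identity $\int\phi_n\,dm=1$ does not force $\phi_n\to 1$; indeed for a singular symmetric $h$ the martingale limit is $\phi_\infty=0$ a.e.\ with $\int\phi_\infty\,dm=0<1$, which is consistent with Doob.

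\emph{The cocycle claim is unjustified.} Your ratio $s_n(x)=\phi_{n-1}(f(x))/\phi_n(x)$ equals the quotient of the $f$-scaling approximant at $x$ by the $g$-scaling approximant at $h(x)$. Symmetry of $h$ does not say these two scaling approximants agree; that would be a nontrivial rigidity statement in its own right, essentially what the theorem is trying to prove. Hence the $f$-invariance of $\phi_\infty$, and with it the appeal to ergodicity, is not established.

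\emph{Where Lebesgue invariance enters.} Your sketch uses $\int\phi_n\,dm=1$, but that identity holds for any $h\in\CH$, independent of whether $f$ and $g$ preserve $m$. The invariance hypothesis must be used elsewhere; in the paper it is used precisely to show that the set $X$ where the supremal ratio $\Phi$ is realized is dense (Lemma~\ref{dense}), via the telescoping sums~(\ref{sum1})--(\ref{sum2}) together with Lemma~\ref{tail}. Your proposal does not have an analogous step, so the potential for mass to escape to a singular limit is not actually ruled out.

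If you want to salvage a martingale route, the key missing ingredient is some replacement for bounded nearby geometry; in the general bounded-geometry setting the paper's density-of-$X$ covering argument sidesteps all of the above by never needing to compare ratios across unequal or distant atoms.
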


Since a uniformly symmetric circle endomorphism is uniformly quasisymmetric, and a uniformly quasisymmetric circle endomorphism has bounded geometry (see~\cite{JiangNote,JiangGibbs}), Theorem~\ref{main} gives an affirmative answer to both Conjecture~\ref{conj1} and Conjecture~\ref{conj2} (see Corollary~\ref{cor1} and Corollary~\ref{cor2}).  Our main theorem (Theorem~\ref{main}) also gives new proofs of the previous symmetric rigidity results (Corollary~\ref{cor3} and Corollary~\ref{cor4}) for the smooth case which were proved in~\cite{JiangSymm} by using transfer operators.  
  
We organize this paper as follows. In Section~\ref{cehbg}, we define a circle endomorphism having bounded geometry. In the same section, we review the definition of a uniformly quasisymmetric circle endomorphism and the definition of a uniformly symmetric circle endomorphism. We also review the definition of a $C^{1+Dini}$ expanding circle endomorphism. All of these are examples of circle endomorphisms having bounded geometry. 
In Section~\ref{srcehbg}, we study the symmetric rigidity for circle endomorphisms having bounded geometry and prove our main theorem (Theorem~\ref{main}). 
Finally, in the same section we state several corollaries (Corollary~\ref{cor1}, Corollary~\ref{cor2}, Corollary~\ref{cor3}, and Corollary~\ref{cor4}) of our main theorem.

\bigskip
\noindent {\bf Acknowledgment:} We would like to thank Professor Frederick Gardiner for help and communications during this research. 

\bigskip

\section{Circle Endomorphisms Having Bounded Geometry}~\label{cehbg}

Let $T=\{ z\in {\mathbb C}\; |\; |z|=1\}$
be the unit circle in the complex plane ${\mathbb C}$.
Let $m$ be the Lebesgue probability measure on $T$ (i.e. a Haar measure on $T$).
Suppose
$$
  f: T\to T
$$
is an orientation-preserving covering map of degree $d\geq 2$. We
call it a circle endomorphism. Suppose
$$
  h: T\to T
$$
is an orientation-preserving homeomorphism. We call it a circle homeomorphism.
Every circle endomorphism $f$ has at least one fixed point.
By conjugating $f$ by a rotation of the circle if necessary,
we assume that $1$ is a fixed point of $f$, that is, $f(1)=1$.

The universal cover of $T$ is the real line ${\mathbb R}$ with a
covering map
$$
  \pi (x) = e^{2\pi i x}: {\mathbb R} \to T.
$$
In this way, we can think the unit interval $[0,1]$ as the unit circle $T$. 

Then every circle endomorphism $f$ can be lifted to a homeomorphism
$$
  F: {\mathbb R}\to {\mathbb R}, \quad F(x+1)=F(x)+d,
  \quad \forall x\in {\mathbb R}.
$$
\begin{equation*}
  \begin{tikzcd}
    \mathbb{R}\arrow[r, "F"]\arrow[d, "\pi"] & \mathbb{R}\arrow[d, "\pi"] \\
    T\arrow[r, "f"]& T
    \end{tikzcd}
\end{equation*}
We will assume that $F(0)=0$ so that there is
a one-to-one correspondence between $f$ and $F$. 
Therefore, we also call such a map $F$ a circle endomorphism.

Similarly, every circle homeomorphism $h$ can be lifted to an
orientation-preserving homeomorphism
$$
  H: {\mathbb R}\to {\mathbb R}, \quad H(x+1)=H(x)+1,
  \quad \forall x\in {\mathbb R}.
$$
\begin{equation*}
  \begin{tikzcd}
    \mathbb{R}\arrow[r, "H"]\arrow[d, "\pi"] & \mathbb{R}\arrow[d, "\pi"] \\
    T\arrow[r, "h"]& T
    \end{tikzcd}
\end{equation*}
We will assume that $0\leq H(0)<1$ so that there is a
one-to-one correspondence between $h$ and $H$.
Therefore, we also call such a map $H$ a circle homeomorphism.
Since we only consider circle homeomorphisms as conjugacies of circle endomorphisms in this paper, we assume $h(1)=1$ (equivalently,  $H(0)=0$). We use $id$ and $ID$ to denote the identity circle homeomorphism and its lift to $\mathbb{R}$, respectively. That is, $id(z)=z$ and $ID(x)=x$.

Let $\CE(d)$ be the space of all circle endomorphisms $f$ (or $F$) of degree $d\geq 2$ fixing $1$ (or $0$) and let $\CH$ be the space of all circle homeomorphisms $h$ (or $H$) fixing $1$ (or $0$). 


\medskip
\begin{definition}~\label{def1}
  A circle homeomorphism $h\in \CH$ is called quasisymmetric (refer to~\cite{Ah})
  if there exists a constant $M\geq1$ such that
  \begin{equation*}
    \frac{1}{M}\leq\frac{H(x+t)-H(x)}{H(x)-H(x-t)}\leq M
    \quad\forall x\in\mathbb{R},\;\;\forall t>0.
  \end{equation*}
  It is called symmetric (refer to~\cite{GS})
  if there exists a positive bounded function $\epsilon(t)$
  such that $\epsilon(t)\to0$ as $t\to0^{+}$ and
  \begin{equation*}
    \frac{1}{1+\epsilon(t)}\leq
    \frac{H(x+t)-H(x)}{H(x)-H(x-t)}\leq1+\epsilon(t)
    \quad\forall x\in\mathbb{R},\;\; \forall t>0.
  \end{equation*}
\end{definition}

\medskip
\begin{definition}~\label{def2}
  A circle endomorphism $f\in \CE(d)$ is called uniformly quasisymmetric (refer to~\cite{JiangNote,JiangGibbs})
  if there exists a constant $M\geq1$ such that
  \begin{equation*}
    \frac{1}{M}\leq\frac{F^{-n}(x+t)-F^{-n}(x)}{F^{-n}(x)-F^{-n}(x-t)}\leq M
    \quad\forall n\geq1,\;\;\forall x\in\mathbb{R},\;\;\forall t>0.
  \end{equation*}
  It is called uniformly symmetric (refer to~\cite{GJ,JiangGibbs}) if there exists a positive bounded function $\epsilon(t)$
  such that $\epsilon(t)\to0$ as $t\to0^{+}$ and  
   \begin{equation*}
    \frac{1}{1+\epsilon(t)}\leq\frac{F^{-n}(x+t)-F^{-n}(x)}{F^{-n}(x)-F^{-n}(x-t)}\leq 1+\epsilon(t)
    \quad\forall n\geq1,\;\;\forall x\in\mathbb{R},\;\;\forall t>0.
  \end{equation*}\end{definition}

An example of a symmetric (and quasisymmetric) circle homeomorphism is a $C^{1}$ circle diffeomorphism. However, in general, a symmetric (or quasisymmetric) circle homeomorphism may not be differentiable, and may even be totally singular with respect to the Lebesgue measure.  

If a circle endomorphism $f$ is differentiable and the derivative $F'$ is positive, then we can define the modulus of continuity,
$$
\omega (t) =\sup_{|\xi-\eta|\leq t} |\log F' (\xi)- \log F'(\eta)|.
$$
We say that $f$ is $C^{1+Dini}$ if $\omega(t)$ satisfies the Dini condition that 
$$
\int_{0}^{1} \frac{\omega (t)}{t} \;dt < \infty.
$$
We say that $f$ is $C^{1+\alpha}$ for some $0<\alpha\leq 1$ if the derivative $F'$ is an $\alpha$-H\"older continuous function. It is clear that a $C^{1+\alpha}$ map is a $C^{1+Dini}$ map.
 We say that $f$ is expanding if there are two constants $C>0$ and $\lambda >1$ such that 
 $$
 |(F^{n})' (z)| \geq C\lambda^{n},  \quad \forall z\in T, \;\; \forall n\geq 1.
 $$ 
 
 \medskip
 \begin{example}~\label{example1}
 Every $C^{1+Dini}$ expanding circle endomorphism of degree $d\geq 2$ is uniformly symmetric.
 \end{example}  
 
 See~\cite{JiangGibbs} for a proof of this example. However, in general, a uniformly symmetric (or quasisymmetric) 
 circle endomorphism may not be differentiable, may not be absolutely continuous,
 and may even be totally singular with respect to the Lebesgue measure.  

Let $\QS$ be the space of all quasisymmetric circle homeomorphisms in $\CH$. Let $\S$ be the space of all symmetric circle homeomorphisms in $\CH$. Then we have from Definition~\ref{def1}
$$
\S\subset \QS\subset \CH.
$$

Let $\UQCE(d)$ be the space of all uniformly quasisymmetric circle endomorphisms in $\CE(d)$. Let $\USCE(d)$ be the space of all uniformly symmetric circle endomorphisms in $\CE(d)$. Let $\CED (d)$ be the space of all $C^{1+Dini}$ expanding circle endomorphisms of degree $d\geq 2$. Then we have from Example~\ref{example1} and Definition~\ref{def2}
$$
\CED (d) \subset \USCE(d)\subset \UQCE(d)\subset \CE(d).
$$
 
\medskip
\begin{definition}~\label{lm}
We say $f\in \CE(d)$ preserves the Lebesgue measure $m$ if 
\begin{equation}~\label{inv}
m(f^{-1} (A)) =m(A)
\end{equation}
holds for all Borel subsets $A\subseteq T$.
\end{definition}
 
Henceforth, in order to avoid confusion, we will consistently use 
$$
[0,1]/\{0\sim 1\}= {\mathbb R} \pmod{1}
$$ 
to mean the unit circle. Likewise, we will consistently use
$$
f=F \pmod{1}: [0,1]/\{0\sim 1\}\to [0,1]/\{0\sim 1\}
$$ 
to mean a circle endomorphism and 
$$
h=H \pmod{1}: [0,1]/\{0\sim 1\}\to [0,1]/\{0\sim 1\}
$$ 
to mean a circle homeomorphism.

For any $f\in \CE(d)$, the preimage $f^{-1}(0)$ of the fixed point $0$
partitions $[0,1]$ into $d$ closed and ordered intervals $I_{0}$, $I_{1}$, $\cdots$, $I_{d-1}$  (see Figure 1).
Let
$$
  \eta_{1}=\{ I_{0}, I_{1}, \cdots, I_{d-1}\}.
$$
Then $\eta_1$ is a Markov partition. That is,
\begin{enumerate}[(i)]
  \item $[0,1]=\cup_{i=0}^{d-1} I_{i}$;
 \item $I_{i}$ and $I_{j}$ have pairwise disjoint interiors for any $0\leq i<j\leq d-1$;   
 \item $f(I_{i})=[0,1]$ for every $0\leq i\leq d-1$;
 \item the restriction of $f$ to the interior of $I_{i}$
    is injective for every $0\leq i\leq d-1$.
\end{enumerate}

\begin{figure}[ht]
    \includegraphics[width=3in]{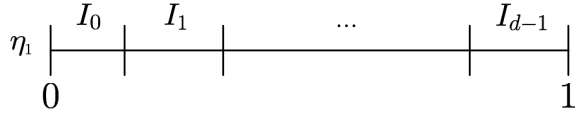}
  \caption{The initial Markov partition.}
\end{figure} 

The preimage $f^{-n}(0)$ of the fixed point $0$
partitions $[0,1]$ into $d^{n}$ closed intervals $I_{w_{n}}$ labeled by 
$$
w_{n}=i_{0}i_{1}\ldots i_{n-1}\in\Sigma_n =\prod_{k=0}^{n-1}\{0,1,\ldots,d-1\}
$$
and defined inductively as
$$
  f^{k}(I_{w_{n}}) \subset I_{i_{k}},\;\; \forall0\leq k\leq n-2, \quad \hbox{and}\quad f^{n-1}(I_{w_{n}}) =I_{i_{n-1}}.
$$
Let
$$
\eta_{n} =\{ I_{w_{n}} \;|\; w_{n}=i_{0}i_{1}\ldots i_{n-1}\in\Sigma_n\}.
$$
Then $\eta_n$ is also a Markov partition. That is,
\begin{enumerate}
  \item $[0,1]=\cup_{w_{n}\in \Sigma_{n}} I_{w_{n}}$;
  \item intervals in $\eta_{n}$ have pairwise disjoint interiors; 
  \item $f^{n}(I_{w_{n}})=[0,1]$ for every $w_{n}\in \Sigma_{n}$;
  \item the restriction of $f^{n}$ to the interior of $I_{w_{n}}$
    is injective for every $w_{n}\in \Sigma_{n}$.
\end{enumerate}
   
\medskip
\begin{remark}
Suppose ${\mathcal A}$ and ${\mathcal B}$ are two partitions of $T$. The partition 
$$
A\vee B =\{ A\cap B\;|\; A\in {\mathcal A}, B\in {\mathcal B}\}
$$
is the finer partition from ${\mathcal A}$ and ${\mathcal B}$. Then we have that
$$
\eta_{n} =\vee_{k=1}^{n} f^{-k} \eta_{1}.
$$
\end{remark}   
   
Let $\s$ be the left-shift map and let $\s^*$ be the right-shift map on $\Sigma_n$, that is,
$$
  \s(\o_n)=\s(i_0i_1\ldots i_{n-2}i_{n-1})=i_1\ldots i_{n-2}i_{n-1}
$$
and
$$
  \s^{*}(\o_n)=\s^*(i_0i_1\ldots i_{n-2}i_{n-1})=i_0i_1\ldots i_{n-2}.
$$
Here we assume $w_{0}=\emptyset$ and $I_{w_{0}}=[0,1]$ and $\s (w_{1}) =w_{0}$ and $\s^{*} (w_{1}) =w_{0}$. 
Then we have 
$$
  I_{w_n}=\cup_{k=0}^{d-1}I_{w_nk}=\cup_{w_{n+1}\in (\s^{*})^{-1} (w_{n})} I_{w_{n+1}} 
$$
and
$$
f^{-1}(I_{w_n})=\cup_{k=0}^{d-1}I_{k\o_n}=\cup_{w_{n+1}\in \s^{-1} (w_{n})} I_{w_{n+1}}.
$$

\begin{figure}[ht]
    \includegraphics[width=3.5in]{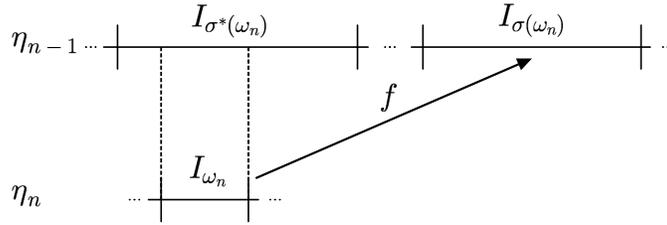}
  \caption{$I_{w_{n}}\subset I_{\s^{*}(w_{n})}$ and $f(I_{w_{n}})=I_{\s (w_{n})}$.}   
\end{figure} 

\medskip
\begin{definition}~\label{bg}
  A circle endomorphism $f$ is said to have bounded geometry (refer to~\cite{JiangNote,JiangGibbs}) 
 if there is a constant $C>1$ such that 
  \begin{equation}\label{bgdef}
    \frac{|I_{\s^*(\o_n)}|}{|I_{\o_n}|}\leq C,
    \quad\forall\o_n\in\Sigma_n,\;\;\forall n\geq1. \quad \hbox{(See Figure 2)}
  \end{equation}
\end{definition}

\medskip
Let $\BGCE(d)$ be the space of all $f\in \CE(d)$ having bounded geometry. Then we have (refer to~\cite{JiangNote,JiangGibbs})
$$
\CED (d) \subset \USCE(d) \subset \UQCE(d)  \subset \BGCE(d)\subset \CE(d).
$$
We know that $\USCE(d)$ is not equal to $\UQCE(d)$ (refer to~\cite{JiangNote}). Also, $\UQCE(d)$ is not equal to $\BGCE(d)$. 
For example, for any $\alpha\in(1/2,1)$, the piecewise-linear degree 2 circle endomorphism
\begin{equation*}
  f_\alpha(x)=\left\{ \begin{array}{ll}
      f_\alpha(x+1)-2&\mbox{if }x<0\\
      x/\alpha&\mbox{if }0\leq x<\alpha\\
      1+(x-\alpha)/(1-\alpha)&\mbox{if }\alpha\leq x<1\\
      f_\alpha(x-1)+2&\mbox{if }1\leq x
  \end{array}\right.
\end{equation*}
has bounded geometry because
\begin{equation*}
  \frac{|I_{\s^*(\o_n)}|}{|I_{\o_n}|}\in\{1/\alpha, 1/(1-\alpha)\},
    \quad\forall\o_n\in\Sigma_n,\forall n\geq1.
\end{equation*}
However, $f_\alpha$ is not uniformly quasisymmetric since for any $0<t<1-\alpha$,
\begin{equation*}
  \frac{f_\alpha^{-n}(0+t)-f_\alpha^{-n}(0)}{f_\alpha^{-n}(0)-f_\alpha^{-n}(0-t)}=\left(\frac{\alpha}{1-\alpha}\right)^n\to\infty\mbox{ as }n\to\infty.
\end{equation*}
\begin{remark}
The property of uniform quasisymmetry for a circle endomorphism can be equivalently characterized in terms of its sequence of nested partitions $\{\eta_n\}$ alone by saying that the circle enedomorphism has bounded nearby geometry. The precise definition of bounded nearby geometry is given and its equivalence to uniform quasisymmetry is proved in~\cite{JiangGeo,JiangBook}. For more on circle endomorphisms with bounded geometry and/or bounded nearby geometry, see also~\cite{JiangNote,JiangGibbs,HuThesis,AdamskiThesis}).
\end{remark}

For $f\in \BGCE(d)$, let
$$
\tau_{n} =\max\{ |I_{w_{n}}| \;|\; w_{n}\in \Sigma_{n}\}.
$$
Then from Definition~\ref{bg}, we have a constant $0<\tau<1$ such that 
\begin{equation}~\label{expdecay}
\tau_{n} \leq \tau^{n}, \quad \forall n\geq 1.
\end{equation}
It follows that any two maps $f, g\in \BGCE(d)$ are topologically conjugate. That is, there is an $h\in \CH$ such that 
\begin{equation}~\label{ceg}
f \circ h = h\circ g.
\end{equation}
Here $h$ is called the conjugacy from $f$ to $g$, and when $h\in \S$ we call it a symmetric conjugacy.
In the special case that both $f$ and $g$ are in $\UQCE(d)$, we further know that the conjugacy $h\in \QS$. However, as long as at least one of the maps is not in $\UQCE(d)$, 
the conjugacy $h$ may not be quasisymmetric. Refer to~\cite{JiangNote,JiangGibbs,HJW,HuThesis,AdamskiThesis}.

\section{Symmetric Rigidity, the Proof of the Main Result} ~\label{srcehbg}

We start 
with the following lemma.

\medskip
\begin{lemma}~\label{tail}
 Suppose $f\in \BGCE(d)$. Let $\{\eta_{n}\}_{n=1}^{\infty}$ be the corresponding sequence of partitions.  
 Suppose $I_{w_{n}}\in \eta_{n}$ is a fixed partition interval for some $n\geq 1$. Then 
 $$
\lim_{k\to \infty} \sum_{w_{n}^{1} \not=w_{n}} \cdots \sum_{w_{n}^{k}\not=w_{n}} |I_{w_{n}^{1}\cdots w_{n}^{k}}|=0,
$$ 
where the $\omega_n^i$ are all words of length $n$.
 \end{lemma}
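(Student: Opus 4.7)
The plan is to distill bounded geometry into a single contraction factor $\rho\in(0,1)$ that controls one "bad block" of length $n$, and then iterate this factor $k$ times to obtain geometric decay. The preliminary step is to upgrade the one-step bound (\ref{bgdef}) to an $n$-step bound: for any finite word $W$ and any $\tilde w_n\in \Sigma_n$, iterating (\ref{bgdef}) exactly $n$ times (stripping one symbol off the right at each step) yields
$$\frac{|I_{W\tilde w_n}|}{|I_W|}\geq \frac{1}{C^n},$$
uniformly in $W$ and $\tilde w_n$. Specializing $\tilde w_n=w_n$ gives $|I_{Ww_n}|\geq C^{-n}|I_W|$. Since the intervals $\{I_{W\tilde w_n}\}_{\tilde w_n\in \Sigma_n}$ have pairwise disjoint interiors and cover $I_W$, we obtain the basic one-step estimate
$$\sum_{\tilde w_n\ne w_n}|I_{W\tilde w_n}|\;\leq\; (1-C^{-n})\,|I_W|.$$

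Next I would run a straightforward induction on $k$. Define
$$S_k \;=\; \sum_{w_n^{1}\ne w_n}\cdots\sum_{w_n^{k}\ne w_n}|I_{w_n^{1}\cdots w_n^{k}}|.$$
Peeling off the innermost sum, for each fixed prefix $W=w_n^{1}\cdots w_n^{k-1}$ the one-step estimate above gives $\sum_{w_n^{k}\ne w_n}|I_{Ww_n^{k}}|\leq(1-C^{-n})|I_W|$. Summing this over the remaining indices yields $S_k\leq (1-C^{-n})\,S_{k-1}$. Since $S_0=|[0,1]|=1$, induction produces
$$S_k\;\leq\;(1-C^{-n})^{k},$$
and setting $\rho=1-C^{-n}\in(0,1)$, we conclude $S_k\to 0$ geometrically as $k\to\infty$.

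The argument is essentially mechanical: the only piece of genuine content is the iteration of the bounded geometry inequality (\ref{bgdef}) $n$ times, which turns a single-generation comparison into a uniform $n$-generation comparison $|I_{W\tilde w_n}|\geq C^{-n}|I_W|$ that does not degrade with the depth of $W$. Given this uniformity, there is no serious obstacle; the lemma is a direct consequence of bounded geometry together with the Markov structure of the nested partitions $\{\eta_n\}$.
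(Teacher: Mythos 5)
Your proof is correct and follows essentially the same route as the paper: both iterate the one-step bounded-geometry inequality $n$ times to obtain the uniform estimate $|I_{Ww_n}|\geq C^{-n}|I_W|$, deduce that the complementary sum over $\tilde w_n\neq w_n$ contracts by the factor $1-C^{-n}$, and then induct on $k$ to get geometric decay. The only difference is cosmetic: you isolate the $n$-step bound as a stated preliminary, while the paper invokes it inline at each stage of the induction.
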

 
 \begin{proof}
 From the definition of bounded geometry (Definition~\ref{bg}), we have that
 $$
 |I_{w_{n}}| \geq A=\frac{1}{C^{n}}.
 $$
 Since 
 $$
 \bigcup_{w_{n}^{1}\not=w_{n}} I_{w_{n}^{1}}=\overline{[0,1]\setminus I_{w_{n}}},
 $$  
 we get
 $$
 \sum_{w_{n}^{1}\not=w_{n}} |I_{w_{n}^{1}}| = 1-|I_{w_{n}}|  \leq 1-A.
 $$
 
For any $w_{n}^{1}$, we have that $I_{w_{n}^{1}w_{n}} \subset I_{w_{n}^{1}}$. Because of bounded geometry, we further have 
$$
|I_{w_{n}^{1}w_{n}}| \geq A |I_{w_{n}^{1}}|.
$$ 
Since 
$$
\bigcup_{w_{n}^{2}\not=w_{n}} I_{w_{n}^{1}w_{n}^{2}}=\overline{I_{w_{n}^{1}}\setminus I_{w_{n}^{1}w_{n}}},
$$
we have
$$
\sum_{w_{n}^{2}\not=w_{n}} |I_{w_{n}^{1}w_{n}^{2}}|= |I_{w_{n}^{1}}|-|I_{w_{n}^{1}w_{n}}| \leq |I_{\omega_n^1}| - A|I_{\omega_n^1}| = (1-A) |I_{w_{n}^{1}}|.
$$
This implies that 
$$
\sum_{w_{n}^{1}\not=w_{n}} \sum_{w_{n}^{2}\not= w_{n}} |I_{w_{n}^{1}w_{n}^{2}}|\leq (1-A)\sum_{w_{n}^{1}\not=w_{n}} |I_{w_{n}^{1}}|\leq (1-A)^{2}.
$$ 

Inductively, suppose we know
$$
\sum_{w_{n}^{1}\not=w_{n}} \cdots \sum_{w_{n}^{k-1}\not=w_{n}} |I_{w_{n}^{1}\ldots w_{n}^{k-1}}| 
\leq (1-A)^{k-1}
$$
for $k\geq 3$.
Then 
$$
\sum_{w_{n}^{1}\not=w_{n}} \cdots \sum_{w_{n}^{k}\not=w_{n}} |I_{w_{n}^{1}\ldots w_{n}^{k}}|
 =\sum_{w_{n}^{1}\not=w_{n}} \cdots \sum_{w_{n}^{k-1}\not=w_{n}} \Big( |I_{w_{n}^{1}\cdots w_{n}^{k-1}}| - |I_{w_{n}^{1}\cdots w_{n}^{k-1}w_{n}}|\Big)
$$
$$
= \sum_{w_{n}^{1}\not=w_{n}} \cdots \sum_{w_{n}^{k-1}\not=w_{n}} |I_{w_{n}^{1}\cdots w_{n}^{k-1}}| \Big( 1-\frac{  |I_{w_{n}^{1}\cdots w_{n}^{k-1}w_{n}}|}{  |I_{w_{n}^{1}\cdots w_{n}^{k-1}}|}\Big).
$$
Notice that $I_{w_{n}^{1}\cdots w_{n}^{k-1}w_{n}} \subset I_{w_{n}^{1}\cdots w_{n}^{k-1}}$. The definition of bounded geometry implies that
$$
\frac{ |I_{w_{n}^{1}\cdots w_{n}^{k-1}w_{n}}|}{ |I_{w_{n}^{1}\cdots w_{n}^{k-1}}|}\geq A.
$$
This implies that 
$$
1-\frac{ |I_{w_{n}^{1}\cdots w_{n}^{k-1}w_{n}}|}{ |I_{w_{n}^{1}\cdots w_{n}^{k-1}}|}\leq 1-A.
$$
Thus
$$
\sum_{w_{n}^{1}\not=w_{n}} \cdots \sum_{w_{n}^{k}\not=w_{n}} |I_{w_{n}^{1}\ldots w_{n}^{k}}| \leq (1-A) \sum_{w_{n}^{1}\not=w_{n}} \cdots \sum_{w_{n}^{k-1}\not=w_{n}} |I_{w_{n}^{1}\cdots w_{n}^{k-1}}|\leq (1-A)^{k}.
$$
Letting $k\to\infty$, this proves the lemma.
 \end{proof}

Given a partition interval $I_{w_{n}}\in \eta_{n}$ for some $n\geq 1$, define 
$$
C(I_{w_{n}}) = \{ x\in [0,1]\;|\; f^{kn}(x) \not\in I_{w_{n}}, k=0, 1, 2, \cdots\}=\bigcap_{i=1}^\infty \left( \bigcup_{\substack{  \omega_n^j \neq \omega_n \\ 1 \leq j \leq i}}I_{\omega_n^1\ldots\omega_n^i}\right).
$$
A consequence of Lemma~\ref{tail} is the following. 

\medskip
\begin{corollary}~\label{zero}
 Suppose $f\in \BGCE(d)$. Then the set $C(I_{w_{n}})$ has zero Lebesgue measure. That is, $m(C(I_{w_{n}}))=0$.
\end{corollary}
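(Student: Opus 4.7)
The plan is to derive the corollary as a direct consequence of Lemma~\ref{tail} via upper continuity of Lebesgue measure. First, observe from the definition that
$$
C(I_{w_n}) = \bigcap_{i=1}^{\infty} E_i, \qquad E_i := \bigcup_{\substack{w_n^j \neq w_n \\ 1 \leq j \leq i}} I_{w_n^1 \ldots w_n^i},
$$
and that the sets $E_i$ form a nested decreasing family: $E_{i+1} \subset E_i$, since requiring avoidance of $w_n$ in one more coordinate block can only shrink the union. Since $m(E_1) \leq 1 < \infty$, upper continuity of measure gives $m(C(I_{w_n})) = \lim_{i \to \infty} m(E_i)$.

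Next I would compute $m(E_i)$. The intervals appearing in the definition of $E_i$ belong to the partition $\eta_{ni}$ (up to concatenation of words of length $n$), and are therefore pairwise disjoint in interiors. Hence
$$
m(E_i) = \sum_{w_n^1 \neq w_n} \cdots \sum_{w_n^i \neq w_n} |I_{w_n^1 \ldots w_n^i}|.
$$
This is precisely the quantity controlled by Lemma~\ref{tail}, which tells us that this sum tends to $0$ as $i \to \infty$. Combining this with the previous displayed equation yields $m(C(I_{w_n})) = 0$.

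There is essentially no obstacle here: once one recognizes $C(I_{w_n})$ as a decreasing intersection and identifies the measure of the $i$-th approximating set with the sum appearing in Lemma~\ref{tail}, the conclusion is immediate. The only small bookkeeping point worth stating carefully is the disjointness of the intervals $I_{w_n^1\ldots w_n^i}$ (so that measure adds), which follows from their membership in the Markov partition $\eta_{ni}$.
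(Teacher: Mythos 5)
Your argument is correct and is exactly the reasoning the paper has in mind; the paper states the corollary as an immediate consequence of Lemma~\ref{tail} without spelling out a proof, and your write-up supplies precisely the missing details (nestedness of the $E_i$, continuity of measure from above, disjointness of the partition intervals at level $ni$). One small simplification you could make: continuity from above is not strictly needed, since $C(I_{w_n}) \subset E_i$ for every $i$ already gives $m(C(I_{w_n})) \le m(E_i) \to 0$ by monotonicity alone.
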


Suppose $f$ and $g$ are both in $\BGCE(d)$ and $h\in \CH$ is the conjugacy from $f$ to $g$. Define the number
$$
1\leq \Phi=\sup_{I\subseteq [0,1]} \frac{|h(I)|}{|I|}\leq \infty
$$
and the set
\begin{equation}~\label{no1}
 X=\{ x\in [0,1]\;|\; \exists I_{k}^{x}=[a_{k}, b_{k}], \lim_{k\to \infty} a_{k}=\lim_{k\to \infty} b_{k} =x, \lim_{k\to \infty} \frac{|h(I_{k}^x)|}{|I_{k}^x|} =\Phi\}
 \end{equation}
We would like to note that, in general, $\Phi=\infty$ and when $\Phi<\infty$, $h$ is a Lipschitz conjugacy.

 \medskip
 \begin{remark} 
 Similarly, we can also define  
$$
0\leq \phi= \inf_{I\subseteq [0,1]} \frac{|h(I)|}{|I|}\leq 1
$$ 
and use $\phi$ to prove Theorem~\ref{main}.
\end{remark}

\medskip
\begin{lemma}~\label{nonempty}
Suppose $f, g\in \BGCE(d)$. Then $X$ is a non-empty subset of $T$.
\end{lemma}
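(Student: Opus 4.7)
The plan is to split the argument by whether the (possibly infinite) supremum $\Phi$ is finite, and in each case to pass to a convergent subsequence of intervals whose ratios approach $\Phi$, then identify a limit point belonging to $X$.

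First I would dispose of the case $\Phi=\infty$. By definition of the supremum, choose a sequence of intervals $J_k=[a_k,b_k]\subseteq[0,1]$ with $|h(J_k)|/|J_k|\to\infty$. Since $|h(J_k)|\le 1$, the lengths $b_k-a_k$ must tend to $0$. Using compactness of $[0,1]$, pass to a subsequence so that $a_k\to x$ for some $x\in[0,1]$; then necessarily $b_k\to x$ as well, and these intervals witness $x\in X$.

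Next I would treat $\Phi<\infty$. Pick intervals $J_k=[a_k,b_k]$ with $|h(J_k)|/|J_k|\to\Phi$, and by compactness pass to a subsequence with $a_k\to a$ and $b_k\to b$. If $a=b$, the intervals $J_k$ already witness $x=a\in X$. If instead $a<b$, continuity of $h$ gives $|h(J_k)|=h(b_k)-h(a_k)\to h(b)-h(a)=|h([a,b])|$ while $|J_k|\to b-a$, so $|h([a,b])|/(b-a)=\Phi$, i.e., the limiting interval itself attains the supremum. The key observation is then a subdivision argument: for any $c\in(a,b)$, the additivity $|h([a,b])|=|h([a,c])|+|h([c,b])|$ together with the upper bounds $|h([a,c])|\le\Phi(c-a)$ and $|h([c,b])|\le\Phi(b-c)$ forces both of these inequalities to be equalities. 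Iterating, every closed subinterval of $[a,b]$ of positive length has ratio exactly $\Phi$. Consequently, for any interior point $x\in(a,b)$, the shrinking sequence of intervals $[x-1/k,x+1/k]$ (for $k$ large enough) has ratio $\Phi$, so $x\in X$.

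I do not expect this lemma to present a real obstacle, since it is essentially a soft compactness-and-supremum argument and uses none of the measure-preservation or bounded-geometry hypotheses beyond what is built into $\CH$. The only point that requires a little care is the case $\Phi=\infty$, which has to be separated from the main argument because there the limiting endpoints cannot remain distinct; the bound $|h(J_k)|\le 1$ is what forces the intervals to degenerate to a single point, making the argument go through without invoking the subdivision trick.
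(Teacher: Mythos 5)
Your proposal is correct and follows essentially the same route as the paper: pass to a convergent subsequence of extremizing intervals, dispose of the degenerate-limit case directly (which covers $\Phi=\infty$), and in the nondegenerate case use the subdivision/averaging argument to show the entire limit interval lies in $X$. The only cosmetic difference is that you split at a single interior point and iterate, whereas the paper removes a subinterval in one step by writing $I=L\cup I'\cup R$; both are the same inequality in disguise.
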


\begin{proof}
Suppose $\{I_{k}=[a_{k}, b_{k}]\}_{k=1}^{\infty} $ is a sequence of intervals such that 
$$
\lim_{k\to \infty} \frac{|h(I_{k})|}{|I_{k}|}=\Phi.
$$
By taking a subsequence if necessary, we assume that $\{ a_{k}\}_{k=1}^{\infty}$ and  $\{b_{k}\}_{k=1}^{\infty}$ are two convergent sequences of numbers and $a=\lim_{k\to \infty} a_{k}$ and $b=\lim_{k\to \infty} b_{k}$. 

If $a=b=x$, then $x\in X$ and $X\not=\emptyset$. Note that if $\Phi=\infty$ then $a=b$.

If $a<b$, then $I=[a, b]$ is a non-trivial interval such that 
\begin{equation}~\label{reach1}
\frac{|h(I)|}{|I|}=\Phi. 
\end{equation}
In this case, we claim that for any non-trivial subinterval $I'\subset I$, $|h(I')|/|I'|=\Phi$. 
The claim implies that $I\subset X$, and thus, $X\not=\emptyset$. 
Now we prove the claim as follows. Let $I'=[a',b']$ with $a\leq a'<b'\leq b$. Let $L=[a,a']$ and $R=[b',b]$. Then we have $I=L\cup I' \cup R$ and $h(I) =h(L)\cup h(I')\cup h(R)$. 
Assume $|h(I')|/|I'|<\Phi$. Then, since $|h(L)| \leq \Phi|L|$, and $|h(R)| \leq \Phi|R|$, we have
$$
\frac{|h(I)|}{|I|}= \frac{|h(L)|+|h(I')| +|h(R)|}{|L|+|I'|+|R|} <\Phi.
$$ 
This is a contradiction. Thus we have proved the claim and completed the proof.
\end{proof}

Furthermore, under the assumption that both $f$ and $g$ preserve the Lebesgue measure $m$, we have the following stronger result. 

\medskip
\begin{lemma}~\label{dense}
Suppose $f, g\in \BGCE(d)$ both preserve the Lebesgue measure $m$. Then $X$ is dense in $[0,1]$. That is,  $\overline{X}=[0,1]$.
\end{lemma}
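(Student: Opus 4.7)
The plan is to show that $X$ is invariant under the inverse branches of $f$, and then to use density of preimages (which follows from bounded geometry via $\tau_n\leq\tau^n\to 0$) to deduce density of $X$. The main tool is Lebesgue invariance combined with the conjugacy relation, which transfers the supremum ratio $\Phi$ from $x_0\in X$ to its preimages.

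Fix $y\in[0,1]$ and $\varepsilon>0$. Since $\tau_n\to 0$, there exist $n$ and $w_n\in\Sigma_n$ with $I_{w_n}\subseteq(y-\varepsilon,y+\varepsilon)$. Let $\phi^f_{w_n}:[0,1]\to I_{w_n}$ denote the inverse branch of $f^n|_{I_{w_n}}$, and similarly $\phi^g_{w_n}$ for $g$. The relation $h\circ f=g\circ h$, iterated and inverted, gives $h\circ\phi^f_{w_n}=\phi^g_{w_n}\circ h$. By Lemma~\ref{nonempty}, pick $x_0\in X$ with a witnessing sequence $\{I_k^{x_0}\}$ satisfying $|h(I_k^{x_0})|/|I_k^{x_0}|\to\Phi$. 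Set $z:=\phi^f_{w_n}(x_0)\in I_{w_n}$ and $J_k:=\phi^f_{w_n}(I_k^{x_0})$; then $J_k\to\{z\}$ and $h(J_k)=\phi^g_{w_n}(h(I_k^{x_0}))$. The goal is to show $|h(J_k)|/|J_k|\to\Phi$, which will place $z\in X\cap(y-\varepsilon,y+\varepsilon)$.

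The key estimate uses Lebesgue invariance of both $f$ and $g$, which gives $\sum_{\omega\in\Sigma_n}|\phi^f_\omega(I)|=|I|$ and $\sum_{\omega\in\Sigma_n}|\phi^g_\omega(I)|=|I|$ for every interval $I\subseteq[0,1]$. Together with the bound $|\phi^g_\omega(h(I_k^{x_0}))|=|h(\phi^f_\omega(I_k^{x_0}))|\leq\Phi\,|\phi^f_\omega(I_k^{x_0})|$ from the definition of $\Phi$, summing over $\omega\in\Sigma_n$ yields
\begin{equation*}
0\leq\sum_{\omega\in\Sigma_n}\bigl(\Phi\,|\phi^f_\omega(I_k^{x_0})|-|\phi^g_\omega(h(I_k^{x_0}))|\bigr)=\Phi\,|I_k^{x_0}|-|h(I_k^{x_0})|.
\end{equation*}
Dividing by $|I_k^{x_0}|$, the right-hand side tends to $0$ as $k\to\infty$, and since every summand on the left is nonnegative, each one (normalized by $|I_k^{x_0}|$) must also tend to $0$. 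For the specific term $\omega=w_n$, this reads $\mu_{w_n}^{(k)}\bigl(\Phi-r_{w_n}^{(k)}\bigr)\to 0$, where $\mu_{w_n}^{(k)}:=|J_k|/|I_k^{x_0}|$ and $r_{w_n}^{(k)}:=|h(J_k)|/|J_k|$.

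The main obstacle is ensuring that $\mu_{w_n}^{(k)}$ stays bounded away from $0$ along some subsequence, as this is what would force $r_{w_n}^{(k)}\to\Phi$ and hence $z\in X$. The freedom in choosing the witnessing sequence $\{I_k^{x_0}\}$ should let one circumvent this: e.g., by selecting $I_k^{x_0}$ to be partition intervals $I_{v_{m_k}}$ containing $x_0$, so that $J_k=\phi^f_{w_n}(I_{v_{m_k}})=I_{w_n v_{m_k}}$ is itself a partition interval whose length can be controlled relative to $|I_{v_{m_k}}|$ via the bounded-geometry inequality~(\ref{bgdef}); a compactness/pigeonhole argument over the finite set $\Sigma_n$ (exploiting $\sum_\omega\mu_\omega^{(k)}=1$) then extracts a subsequence along which $\mu_{w_n}^{(k)}$ is nondegenerate, whence $|h(J_k)|/|J_k|\to\Phi$, giving $z\in X$ and completing the proof.
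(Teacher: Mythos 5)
Your proposal takes a genuinely different route from the paper: you try to \emph{produce} a point of $X$ inside the target partition interval $I_{w_n}$ by pushing a known point $x_0\in X$ through the inverse branch $\phi^f_{w_n}$, whereas the paper argues by contradiction, assuming $I_{w_n}\cap X=\emptyset$, picking $D<\Phi$ with $|h(I)|/|I|\leq D$ on $I_{w_n}$, and iterating the pullback decomposition of an interval $I^D$ with ratio $>D$ so that Lemma~\ref{tail} shows the ``$D$-bounded'' mass eventually accounts for everything. Your telescoping identity from Lebesgue invariance is correct, and the observation that the weighted average $\sum_{\omega\in\Sigma_n}\mu_\omega^{(k)}\bigl(\Phi-r_\omega^{(k)}\bigr)\to 0$ is a nice reformulation. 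But there is a genuine gap at the final step.

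The gap is the pigeonhole claim. Since $\sum_{\omega}\mu_\omega^{(k)}=1$, pigeonhole gives, for each $k$, \emph{some} $\omega_*(k)$ with $\mu_{\omega_*(k)}^{(k)}\geq d^{-n}$, and passing to a subsequence yields one fixed $\omega^*\in\Sigma_n$ with $\mu_{\omega^*}^{(k)}\geq d^{-n}$. There is no reason for $\omega^*$ to equal the target word $w_n$; the conclusion you can actually draw is $\phi^f_{\omega^*}(x_0)\in X$ for \emph{some} level-$n$ word $\omega^*$, which lands in $I_{\omega^*}$, not in the prescribed $I_{w_n}$, and so does not prove density. Bounded geometry does not rescue $\mu_{w_n}^{(k)}$: it only gives $|I_{w_n v_{m_k}}|\geq |I_{w_n}|/C^{m_k}$ and $|I_{v_{m_k}}|\geq 1/C^{m_k}$, so $\mu_{w_n}^{(k)}=|I_{w_n v_{m_k}}|/|I_{v_{m_k}}|$ may decay to $0$ as $m_k\to\infty$; there is no mechanism (bounded geometry controls parent-to-child ratios, not the distortion of $f^n|_{I_{w_n}}$ across $m_k$ levels) that prevents this. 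It is also unclear that you may ``freely'' replace the witnessing intervals $I_k^{x_0}$ from the definition of $X$ by partition intervals $I_{v_{m_k}}\ni x_0$ while still achieving $|h(I_{v_{m_k}})|/|I_{v_{m_k}}|\to\Phi$. Finally, your telescoping identity and the term-by-term nonnegativity both require $\Phi<\infty$, whereas the paper explicitly notes that typically $\Phi=\infty$; the paper's argument works with a finite $D<\Phi$ and is immune to this. To repair your approach you would need a reason that the particular weight $\mu_{w_n}^{(k)}$ cannot degenerate, and the current argument does not provide one.
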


\begin{proof}   
We will prove that for any $n\geq 1$ and for any partition interval $I_{w_{n}}\in\eta_{n}$, $I_{w_{n}}\cap X\not=\emptyset$. 
It will then follow from inequality (\ref{expdecay}) that $\overline{X}=[0,1]$.
We prove it by contradiction.

Assume we have a partition interval $I_{w_{n}}$ such that $I_{w_{n}}\cap X=\emptyset$. Then we can find a number $D<\Phi$ such that 
\begin{equation}~\label{d}
\frac{|h(I)|}{|I|} \leq D
\end{equation}
for all $I\subset I_{w_{n}}$. 

Since $X\not=\emptyset$, we have an interval $I^{D}\subseteq [0,1]$
such that 
\begin{equation}~\label{big}
\frac{|h(I^{D})|}{|I^{D}|} > D.
\end{equation}
We pull back $I^{D}$ by $f^{n}$ to get
$f^{-n}(I^{D})=\cup_{w_{n}^{1}} I_{w_{n}^{1}}^{D}$, where $I^{D}_{w_{n}^{1}}\subset I_{w_{n}^{1}}\in \eta_{n}$ and $f^{n} (I^{D}_{w_{n}^{1}}) =I^{D}$.

Since both $f$ and $g$ preserve the Lebesgue measure $m$, for all $k\geq2$ we have
\begin{align}~\label{sum1}
|I^{D}| &= |I_{\omega_n}^D| + \sum_{\omega_n^1 \neq \omega_n}|I_{\omega_n^1}^D| = |I_{\omega_n}^D| + \sum_{\omega_n^1 \neq \omega_n}|f^{-n}(I_{\omega_n^1}^D)| \nonumber \\
        &= |I_{\omega_n}^D| + \sum_{\omega_n^1 \neq \omega_n}\left( |I_{\omega_n \omega_n^1}^D| + \sum_{\omega_n^2 \neq \omega_n}|I_{\omega_n^2\omega_n^1}^D| \right) \nonumber \\
        &= |I_{\omega_n}^D| + \sum_{\omega_n^1 \neq \omega_n}|I_{\omega_n \omega_n^1}^D| + \sum_{\omega_n^2\neq\omega_n}\sum_{\omega_n^1\neq\omega_n}|I_{\omega_n^2 \omega_n^1}^D| = \ldots \nonumber \\
        &= |I_{w_{n}}^{D}| +  \sum_{l=1}^{k-1}  \sum_{w_{n}^{l}\not=w_{n}} \cdots \sum_{w_{n}^{1}\not= w_{n}}  |I_{w_{n}w_{n}^{l}\ldots w_{n}^{1}}^{D}| + \sum_{w_{n}^{k}\not=w_{n}} \cdots \sum_{w_{n}^{1}\not= w_{n}} |I^{D}_{w_{n}^{k}\ldots w_{n}^{1}}|
\end{align}
and, similarly, 
\begin{equation}~\label{sum2}
|h(I^{D})|=|h(I_{w_{n}}^{D})| + \sum_{l=1}^{k-1}  \sum_{w_{n}^{l}\not=w_{n}} \cdots \sum_{w_{n}^{1}\not= w_{n}}  |h(I_{w_{n}w_{n}^{l}\ldots w_{n}^{1}}^{D})| +  \sum_{w_{n}^{k}\not=w_{n}} \cdots \sum_{w_{n}^{1}\not= w_{n}} |h(I^{D}_{w_{n}^{k}\ldots w_{n}^{1}})|.
\end{equation}
\begin{figure}[ht]
\begin{center}\includegraphics[width=5in]{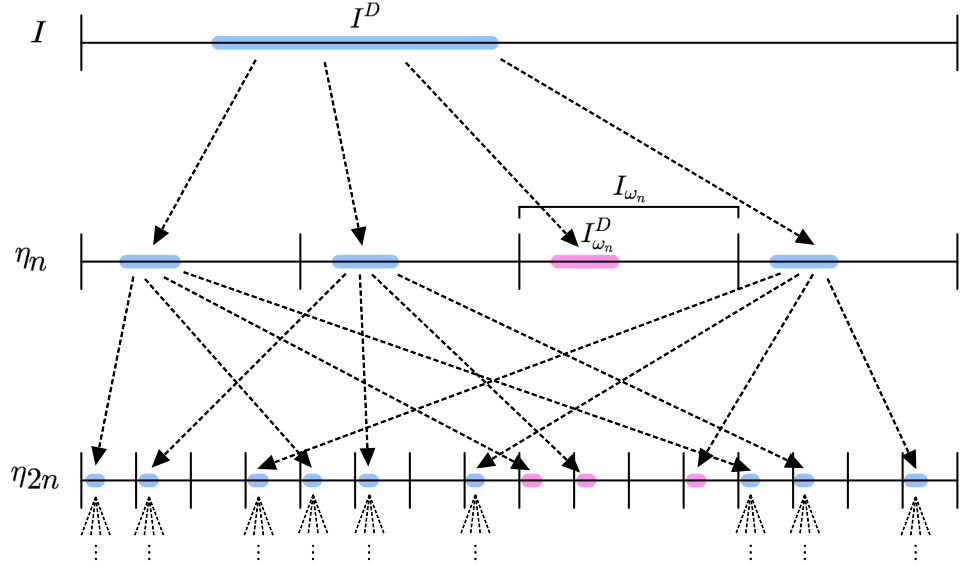}
\caption{The interval $I^D$ has a preimage under $f^n$ composed of $d^n$ intervals, one of which is a subset of $I_{\omega_n}$. Similarly, each of these preimage-intervals that is not a subset of $I_{\omega_n}$ has a preimage under $f^n$ composed of $d^n$ intervals, one of which is a subset of $I_{\omega_n}$. Equation~(\ref{sum1}) says that the length of $I^D$ is equal to the sum of the lengths of all blue intervals belonging to the same arbitrary level plus the lengths all pink intervals belonging to that same level or any previous level.}~\label{preimages}
\end{center}\end{figure}
See Figure~\ref{preimages}. Because $I_{w_{n}}^{D}$ and $I_{w_{n}w_{n}^{l}\ldots w_{n}^{1}}^{D}$ are sub-intervals of $I_{w_{n}}$, (\ref{d}) says that 
$$
\frac{|h(I^{D}_{w_{n}})| }{|I^{D}_{w_{n}}|},  \;\;
\frac{|h(I^{D}_{w_{n}w_{n}^{l}\cdots w_{n}^{1}})| }{|I^{D}_{w_{n}w_{n}^{l}\cdots w_{n}^{1}}|} \leq D\;\; \forall \; l\geq 1.
$$
This implies that 
\begin{equation}~\label{inq}
\frac{|h(I_{w_{n}}^{D})| + \sum_{l=1}^{k-1}  \sum_{w_{n}^{l}\not=w_{n}} \cdots \sum_{w_{n}^{1}\not= w_{n}}  |h(I_{w_{n}w_{n}^{l}\ldots w_{n}^{1}}^{D})|}{|I_{w_{n}}^{D}| +  \sum_{l=1}^{k-1}  \sum_{w_{n}^{l}\not=w_{n}} \cdots \sum_{w_{n}^{1}\not= w_{n}}  |I_{w_{n}w_{n}^{l}\ldots w_{n}^{1}}^{D}|}\leq D\;\;\forall\; k\geq 2.
\end{equation}
From Lemma~\ref{tail}
\begin{equation}~\label{tail1}
\lim_{k\to \infty} \sum_{w_{n}^{k}\not=w_{n}} \cdots \sum_{w_{n}^{1}\not= w_{n}} |I^{D}_{w_{n}^{k}\cdots w_{n}^{1}}|=0
\end{equation}
and 
\begin{equation}~\label{tail2}
\lim_{k\to \infty}  \sum_{w_{n}^{k}\not=w_{n}} \cdots \sum_{w_{n}^{1}\not= w_{n}} |h(I^{D}_{w_{n}^{k}\cdots w_{n}^{1}})|=0.
\end{equation}
Now (\ref{sum1}), (\ref{sum2}), (\ref{inq}), (\ref{tail1}), and (\ref{tail2}) imply that 
$$
\frac{|h(I^{D})|}{|I^{D}|} 
\leq D.
$$
This contradicts (\ref{big}). Thus our assumption that there exists a partition interval $I_{\omega_n}$ such that $I_{\omega_n}\cap X = \emptyset$ is false, and this proves the lemma.
\end{proof}

\begin{proof}[Proof of Theorem~\ref{main}]
We will prove that $\Phi=1$. Equivalently, we will prove that $\Phi>1$ cannot happen, regardless of $\Phi<\infty$ or $\Phi=\infty$.  


We proceed with a proof by contradiction.
Assume $\Phi>1$ (possibly $\infty$). Then we have two numbers $1< D_{1}< D_{2}<\Phi$.
Since $h$ is symmetric (see definition~\ref{def1}), there exists a positive bounded function $\epsilon(t)$
  such that $\epsilon(t)\to0$ as $t\to0^{+}$ and
  \begin{equation*}
    \frac{1}{1+\epsilon(t)}\leq
    \frac{|h(I)|}{|h(I')|}\leq1+\epsilon(t)
  \end{equation*}
holds for all closed intervals $I$ and $I'$ that have the same length $t>0$ and are adjacent, i.e. the right endpoint of one interval is the left endpoint of the other.
Fix $t_0$ such that 
\begin{equation}\label{t0}
  \epsilon(t)<\frac{D_2}{D_1}-1 \quad \forall t<t_0.
\end{equation}
Since $\overline{X}=[0,1]$ (Lemma~\ref{dense}), there exists an interval 
$I=[a, b]\subset (0,1)$ with $|I|=b-a<t_0$ such that 
\begin{equation}~\label{inq1}
\frac{|h(I)|}{|I|} > D_{2}.
\end{equation}
Let $L=[2a-b, a]\subset (0,1)$ and $R=[b, 2b-a]\subset (0, 1)$ (see figure \ref{fig3}). Then the intervals $L$ and $R$ 
are adjacent to $I$ and have the same length as $|I|$. 
It follows from (\ref{t0}) that
\begin{equation*}
  \frac{|h(R)|}{|R|} = \frac{|h(R)|}{|h(I)|} \cdot \frac{|h(I)|}{|I|} \cdot \frac{|I|}{|R|} > \frac{1}{1+\epsilon({b-a})} \cdot D_2 \cdot 1 > D_1
\end{equation*}
and
\begin{equation*}
  \frac{|h(L)|}{|L|} = \frac{|h(L)|}{|h(I)|} \cdot \frac{|h(I)|}{|I|} \cdot \frac{|I|}{|L|} > \frac{1}{1+\epsilon({b-a})} \cdot D_2 \cdot 1 > D_1.
\end{equation*}


\begin{figure}[ht!]
\begin{center}\includegraphics[width=5in]{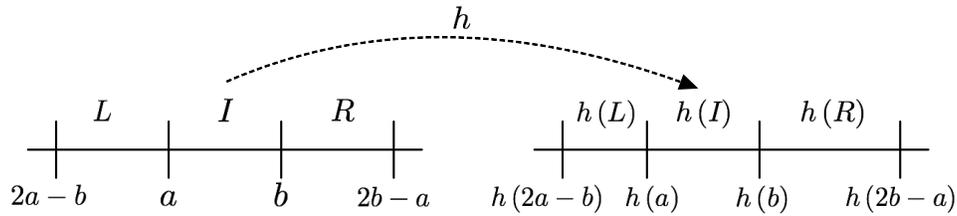}
\caption{$h$ is symmetric.}\label{fig3}
\end{center}\end{figure}

Now we want to show that 
\begin{equation*}
  \frac{|h([a,1])|}{|[a,1]|} > D_1.
\end{equation*}
Consider any interval $J=[b, c]\supset R$ with $2b-a\leq c\leq 1$ satisfying  
\begin{equation}~\label{inqj} 
 \frac{|h(J)|}{|J|} >D_1.
\end{equation}
If $c=1$, we have 
$$
\frac{|h([a, 1])|}{|[a,1]|} = \frac{|h(I\cup J)|}{|I\cup J|} >D_{1}.
$$
Then we have nothing further to prove.

If $c<1$, we have a number $\delta >0$ such that $c+\delta< 1$ and such that for any $x\in [c, c+\delta]$ we have 
\begin{equation*}
 \frac{|h([b,x])|}{|[b,x]|} >D_1.
\end{equation*}
Since $\overline{X} =[0,1]$ (Lemma~\ref{dense}), there is an interval $I_{1}=[a_{1}, b_{1}]\subset [c, c+\delta]$ with $|I_1|<t_0$ such that 
\begin{equation*}
 \frac{|h(I_{1})|}{|I_{1}|} >D_2.
\end{equation*}
Let $J_{1}=[b,a_{1}]$. Then we have three consecutive intervals $I$, $J_{1}$, and $I_{1}$ such that 
$$
\frac{|h([a, b_{1}])|}{|[a,b_{1}]|}  =\frac{|h(I \cup J_{1}\cup I_{1})|}{|I\cup J_{1}\cup I_{1}|}> D_{1}.
$$
(See Figure~\ref{newfig1}.)


\begin{figure}[ht!]
\begin{center}\includegraphics[width=\textwidth]{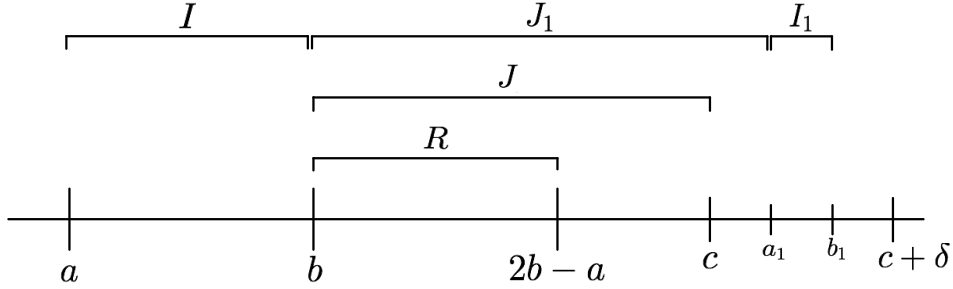}
\caption{Construction of $J_{1}$ and $I_{1}$.}~\label{newfig1}
\end{center}\end{figure}

Consider $I_{1}$ as a new $I$ and repeat the above construction. We get three consecutive intervals 
$I_{1}=[a_{1}, b_{1}]$, $J_{2}=[b_{1}, a_{2}]$, and $I_{2}=[a_{2}, b_{2}]$ such that
 $$
\frac{|h([a_{1}, b_{2}])|}{|[a_{1},b_{2}]|}  =\frac{|h(I_{1} \cup J_{2}\cup I_{2})|}{|I_{1}\cup J_{2}\cup I_{2}|}> D_{1}.
$$
Inductively, for every integer $n\geq 2$, we have three consecutive intervals 
$I_{n-1}=[a_{n-1}, b_{n-1}]$, $J_{n}=[b_{n-1}, a_{n}]$, and $I_{n}=[a_{n}, b_{n}]$ such that 
$$
\frac{|h([a_{n-1}, b_{n}])|}{|[a_{n-1},b_{n}]|}  =\frac{|h(I_{n-1} \cup J_{n}\cup I_{n})|}{|I_{n-1}\cup J_{n}\cup I_{n}|}> D_{1}.
$$
This implies that 
$$
\frac{|h([a, b_{n}])|}{|[a,b_{n}]|}  =\frac{|h(I\cup (\cup_{i=1}^{n} (J_{i}\cup I_{i})))|}{|I\cup (\cup_{i=1}^{n} (J_{i}\cup I_{i}))|}> D_{1}.
$$
If $b_{n}=1$, 
we have 
$$
\frac{|h([a, 1])|}{|[a,1]|} >D_{1}.
$$
Then we have nothing further to prove.

In the case that $b_{n}<1$ for all $n\geq 1$, since $\{b_{n}\}_{n=1}^{\infty}$ is a strictly increasing sequence in $[0,1)$, we have
$$
b_{\infty} =\lim_{n\to \infty} b_{n}\leq 1.
$$
and 
$$
\frac{|h([a, b_{\infty}])|}{|[a,b_{\infty}]|}  =\frac{|h(I\cup (\cup_{n=1}^{\infty} (J_{n}\cup I_{n})))|}{|I\cup (\cup_{n=1}^{\infty} (J_{n}\cup I_{n}))|}> D_{1}.
$$ 
Since $b_{\infty}$ depends on the initially chosen interval $J$, we write it as $b_{\infty}(J)$.
Consider the set 
$$
{\mathcal B} =\{ b_{\infty} (J)\;|\; J \hbox{ satisfies (\ref{inqj})}\}
$$
Let $\beta =\sup {\mathcal B}$. We claim $\beta =1$. Otherwise, we take $J=[b, \beta]$. It satisfies (\ref{inqj}). Then $b_{\infty} (J) >\beta$. 
This contradiction proves the claim, and so 
$$
\frac{|h([a, 1])|}{|[a,1]|}  > D_{1}.
$$

Similarly, by using $L$ instead of $R$ and applying the procedure above, we get 
$$
\frac{|h([0, a])|}{|[0,a]|}  > D_{1}.
$$
Finally, we get the following contradiction.
$$
1= \frac{|h([0,1])|}{|[0,1]|} =\frac{|h([0, a)|+|h([a,1])|}{|[0, a]|+|[a,1]|} > D_{1}>1.
$$
The contradiction implies that $\Phi=1$. 

Since $\Phi=1$, we have that for any non-trivial interval $J\subset [0,1]$, $|h(J)|/|J| = 1$. Otherwise, if there is an interval $J\subset [0,1]$ such that $|h(J)|/|J| <1$, let $L\cup R=[0,1]\setminus J$. 
Then 
$$
1=\frac{|h([0,1])|}{|[0,1]|} = \frac{|h(L)|+|h(J)|+|h(R)|}{|L|+|J|+|R|} <1,
$$
since $|h(L)|\leq |L|$ and $|h(R)|\leq |R|$. This is a contradiction. Since $h(0)=0$, it follows that $h=id$. This  completes the proof of Theorem~\ref{main}. 
\end{proof}

Theorem~\ref{main} has many consequences. In particular, we have affirmative answers to Conjecture~\ref{conj1} and Conjecture~\ref{conj2}, which we state as the following two corollaries.

\medskip
\begin{corollary}~\label{cor1}
Suppose $f, g\in \USCE(d)$ and both maps preserve the Lebesgue measure $m$. Suppose $h$ is the conjugacy from $f$ to $g$, and $h(1)=1$ . If $h\in \S$, then $h=id$.
\end{corollary}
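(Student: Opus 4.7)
The plan is to obtain this corollary as an immediate consequence of the Main Theorem (Theorem~\ref{main}). Since the hard analytic work has already been done in proving that theorem, the only task here is to verify that the hypotheses of Corollary~\ref{cor1} imply the hypotheses of Theorem~\ref{main}, after which the conclusion $h=id$ transfers directly.

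The key observation is the chain of inclusions established in Section~\ref{cehbg}:
\begin{equation*}
  \USCE(d) \subset \UQCE(d) \subset \BGCE(d).
\end{equation*}
Thus, assuming $f, g \in \USCE(d)$, I would first invoke this inclusion to conclude that $f, g \in \BGCE(d)$, i.e., both maps have bounded geometry in the sense of Definition~\ref{bg}. The remaining hypotheses of Theorem~\ref{main}, namely that $f$ and $g$ have the same degree $d \geq 2$, fix the point $1$, and preserve the Lebesgue measure $m$, are either identical or explicitly assumed in the corollary's statement. The conjugacy $h$ with $h(1)=1$ is the same object in both statements, and by hypothesis $h \in \mathcal{S}$, i.e., $h$ is a symmetric circle homeomorphism.

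With all hypotheses of Theorem~\ref{main} verified, I would then apply it directly to conclude that $h = id$, completing the proof. The only potential obstacle is ensuring that the inclusion $\USCE(d) \subset \BGCE(d)$ is invoked with appropriate citation; this is recorded in the excerpt and established in the references \cite{JiangNote,JiangGibbs}, so there is no technical content to supply here beyond citing the Main Theorem itself. The corollary is therefore essentially a specialization of Theorem~\ref{main} to the smaller class $\USCE(d)$, and the proof will consist of little more than this observation.
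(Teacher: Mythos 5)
Your proposal is correct and matches the paper's own reasoning exactly: the paper derives Corollary~\ref{cor1} from Theorem~\ref{main} via the inclusion chain $\USCE(d)\subset\UQCE(d)\subset\BGCE(d)$, precisely as you do. No further comment is needed.
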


\medskip
\begin{corollary}~\label{cor2}
Suppose $f, g\in \UQCE(d)$ and both maps preserve the Lebesgue measure $m$. Suppose $h$ is the conjugacy from $f$ to $g$, and $h(1)=1$.  If $h\in \S$, then $h=id$.
\end{corollary}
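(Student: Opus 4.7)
The plan is to derive Corollary~\ref{cor2} as an immediate application of Theorem~\ref{main}. The only thing to verify is that the hypotheses of the main theorem follow from those of the corollary, and the one non-tautological input required is the inclusion $\UQCE(d) \subset \BGCE(d)$. This inclusion is already recorded in Section~\ref{cehbg} (with reference to~\cite{JiangNote,JiangGibbs}); its justification, if needed, amounts to observing that the uniform quasisymmetry constant $M$ applied to $F^{-n}$ on a pair of equal-length adjacent intervals from $\eta_{1}$ translates, via the Markov structure and the bijectivity of $f^{n}$ on each $I_{\omega_{n}}$, into a uniform upper bound on $|I_{\sigma^{*}(\omega_{n})}|/|I_{\omega_{n}}|$ depending only on $M$ and $d$, which is exactly the bounded geometry condition of Definition~\ref{bg}.

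With this in hand the argument reduces to a single sentence. First I would note that $f,g\in\UQCE(d)\subset\BGCE(d)$ are circle endomorphisms of degree $d\geq 2$ fixing $1$, that both preserve Lebesgue measure $m$ by hypothesis, and that the conjugacy $h$ with $h(1)=1$ is symmetric by hypothesis. Thus every hypothesis of Theorem~\ref{main} is satisfied, and Theorem~\ref{main} yields $h=id$.

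There is essentially no obstacle here; all of the real work is internal to Theorem~\ref{main} (the construction of $X$ in Lemma~\ref{nonempty}, its density in Lemma~\ref{dense}, and the bootstrap from an interval with $|h(I)|/|I|>D_{2}$ to the whole circle using symmetry). The only choice to make in writing the corollary is whether to invoke the chain $\USCE(d)\subset\UQCE(d)\subset\BGCE(d)$ already displayed in Section~\ref{cehbg} by reference or to reproduce a one-line verification of $\UQCE(d)\subset\BGCE(d)$; either is acceptable, and the remainder of the proof is simply an application of Theorem~\ref{main}.
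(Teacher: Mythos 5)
Your proof is correct and matches the paper's approach exactly: Corollary~\ref{cor2} is an immediate application of Theorem~\ref{main} once one invokes the inclusion $\UQCE(d)\subset\BGCE(d)$ recorded in Section~\ref{cehbg}. The paper gives no separate argument for this corollary, treating it as a direct consequence just as you do.
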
 
 
Other consequences are new proofs of some known results in~\cite{JiangSymm} where we proved them by using transfer operators.  

\medskip
\begin{corollary}~\label{cor3}
Suppose $f$ and $g$ are $C^{1+Dini}$ expanding circle endomorphisms and both preserve the Lebesgue measure $m$. Suppose $h$ is the conjugacy from $f$ to $g$, and $h(1)=1$. If $h\in \S$, then $h=id$.
\end{corollary}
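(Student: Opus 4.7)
The plan is to deduce this corollary as an immediate instance of the Main Theorem (Theorem~\ref{main}). The hypotheses on $h$ (conjugacy fixing $1$, symmetric) and on the dynamics (preserving Lebesgue measure, fixing $1$, degree $d\geq 2$) are already in the form required by Theorem~\ref{main}. The only nontrivial input is checking the bounded-geometry condition for $f$ and $g$. I would do this by invoking Example~\ref{example1}, which tells us that every $C^{1+Dini}$ expanding circle endomorphism of degree $d\geq 2$ is uniformly symmetric, i.e.\ lies in $\USCE(d)$, together with the chain of inclusions
\begin{equation*}
\CED(d)\subset \USCE(d)\subset \UQCE(d)\subset \BGCE(d)\subset \CE(d)
\end{equation*}
recorded in Section~\ref{cehbg}. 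In particular $f,g\in \BGCE(d)$.

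Once this is in hand, there is nothing further to prove: $f$ and $g$ satisfy every hypothesis of Theorem~\ref{main}, so its conclusion $h=id$ applies verbatim. There is no genuine obstacle here; the content of the corollary lies entirely in the implication $\CED(d)\subset \BGCE(d)$ (already supplied by Example~\ref{example1} and the inclusions above) together with the generality of Theorem~\ref{main}. One might optionally remark on why this is of interest: the earlier proof in~\cite{JiangSymm} proceeded through transfer-operator/martingale techniques tailored to the smooth setting, whereas the route via Theorem~\ref{main} is purely metric and combinatorial, relying only on the tail estimate of Lemma~\ref{tail} and the symmetry of $h$. This makes Corollary~\ref{cor3} a genuinely new proof even in the smooth case, not merely a specialization.
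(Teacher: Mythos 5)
Your proposal is correct and matches the paper's intended reasoning exactly: the corollary is stated without a separate proof precisely because it follows by combining Example~\ref{example1} with the inclusion chain $\CED(d)\subset \USCE(d)\subset \UQCE(d)\subset \BGCE(d)$ to land in the hypotheses of Theorem~\ref{main}. Nothing is missing.
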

 
 \medskip
 \begin{corollary}~\label{cor4}
Suppose $f$ and $g$ are $C^{1+Dini}$ expanding circle endomorphisms and both preserve the Lebesgue measure $m$. Suppose $h$ is the conjugacy from $f$ to $g$, and $h(1)=1$. If $h$ is absolutely continuous, then $h=id$.
\end{corollary}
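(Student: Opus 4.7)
The plan is to derive Corollary~\ref{cor4} from Theorem~\ref{main} by upgrading the absolutely continuous conjugacy $h$ to a symmetric one. Since $f, g \in \CED(d) \subset \USCE(d) \subset \BGCE(d)$ by Example~\ref{example1} and the inclusions listed after it, both $f$ and $g$ have bounded geometry, and Theorem~\ref{main} applies as soon as $h \in \S$ is verified.

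Since $h$ is absolutely continuous, $h'$ exists a.e., and differentiating $h \circ f = g \circ h$ gives $h'(f(x))\,f'(x) = g'(h(x))\,h'(x)$ a.e. Taking logarithms yields the cohomological equation
$$
\log h' \circ f - \log h' = \log g' \circ h - \log f'
$$
a.e.\ on $T$, whose right-hand side is a $Dini$-continuous function because $f, g$ are $C^{1+Dini}$ and $h$ is continuous.

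The main step is a Livsic-type argument for the expanding map $f$: iterating the cohomological equation along finite orbits of $f$ and using the bounded distortion that follows from the $Dini$ hypothesis, the ergodic sums of the right-hand side converge uniformly, forcing $\log h'$ to agree a.e.\ with a continuous function. Consequently $h \in C^1$, which in particular is symmetric, and Theorem~\ref{main} (equivalently Corollary~\ref{cor3}) immediately yields $h = id$.

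The main obstacle is precisely this Livsic-type upgrade from an a.e.\ identity to a continuous one; the argument is classical under $C^{1+\alpha}$ regularity but requires the standard bounded-distortion estimate for $C^{1+Dini}$ expanding maps in our setting. A cleaner alternative that bypasses the Main Theorem altogether uses uniqueness of the absolutely continuous invariant probability measure for a $C^{1+Dini}$ expanding map: the pushforward $\mu := h_* m$ is $g$-invariant (from $h \circ f = g \circ h$ and $f_*m = m$) and absolutely continuous with respect to $m$ (from absolute continuity of $h$), so uniqueness forces $\mu = m$; evaluating on intervals $[0, y]$ then gives $h^{-1}(y) = y$ for every $y$, hence $h = id$.
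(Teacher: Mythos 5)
Your first approach is essentially the paper's own route: the paper proves Corollary~\ref{cor4} in two lines by citing Shub--Sullivan~\cite{SS} (and~\cite{JiangSm,JiangRig}) for the fact that an absolutely continuous conjugacy between $C^{1+Dini}$ expanding circle maps is a $C^1$ diffeomorphism, and then observing that $C^1$ diffeomorphisms are symmetric so that Corollary~\ref{cor3} applies. What you are sketching with the cohomological equation and the Livsic-type bootstrap \emph{is} the content of the cited Shub--Sullivan result, so you haven't taken a different path — you've just reproved the citation. That is fine, though you rightly flag that making the bounded-distortion step rigorous under the $Dini$ hypothesis is the real work; the paper avoids it by reference.

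Your ``cleaner alternative'' is genuinely different — it bypasses Theorem~\ref{main} and Corollary~\ref{cor3} entirely, using uniqueness of the absolutely continuous invariant probability for a $C^{1+Dini}$ expanding map — but as stated it has a directional gap. You set $\mu = h_*m$, so $\mu(A) = m(h^{-1}(A))$, and claim $\mu \ll m$ from absolute continuity of $h$. But $\mu \ll m$ is equivalent to $h^{-1}$ satisfying Luzin's condition $N$, which does \emph{not} follow from absolute continuity of $h$: if $h' = 0$ on a set $E$ of positive measure, then $h(E)$ is null yet $h^{-1}(h(E)) \supseteq E$ is not, so $\mu$ would charge a null set. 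The fix is to push forward in the other direction: let $\nu = (h^{-1})_*m$, so $\nu(A) = m(h(A))$. Then $\nu \ll m$ is exactly Luzin $N$ for $h$, which \emph{does} follow from absolute continuity of $h$; and from $h\circ f = g\circ h$ and $g_*m = m$ one checks $\nu(f^{-1}A) = m(h(f^{-1}A)) = m(g^{-1}(h(A))) = m(h(A)) = \nu(A)$, so $\nu$ is an $f$-invariant acip. Uniqueness of the acip for the $C^{1+Dini}$ expanding map $f$ (which preserves $m$) gives $\nu = m$, i.e.\ $m(h(A)) = m(A)$ for all Borel $A$; taking $A = [0,x]$ and using $h(0)=0$, $h$ increasing, yields $h(x) = x$. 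With that correction, your alternative is a valid and arguably more elementary proof of Corollary~\ref{cor4}, at the cost of using a different black box (uniqueness of the acip, e.g.\ Krzy\.zewski--Szlenk) instead of the Shub--Sullivan smoothness theorem, and of course it does not exercise the symmetric rigidity machinery that is the point of the paper.
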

 
 \begin{proof}
 As shown in~\cite{SS} (see also~\cite{JiangSm,JiangRig}), if $h$ is absolutely continuous, then $h$ is a $C^{1}$ diffeomorphism. A $C^{1}$ diffeomorphism is symmetric. Now this corollary follows from Corollary~\ref{cor3}.
 \end{proof}

\vspace*{10pt}

\end{document}